\documentclass[reqno]{amsart}
 \usepackage{amssymb,amsmath,amscd,graphicx,color,epstopdf,mathtools,comment}
\usepackage[all]{xy}
\usepackage[all]{xy}
\usepackage{color}
\usepackage{float}
\usepackage{hyperref}
\usepackage{enumerate}
\usepackage{amsthm}
\usepackage{epsfig}
\usepackage[english]{babel}
\usepackage[latin1]{inputenc}

\newcommand{\RR}{{\mathbb{R}}}

\renewcommand{\gg}{\mathfrak{g}}

\newcommand{\nn}{\mathfrak{n}}

\newcommand{\kk}{\mathfrak{k}}
\renewcommand{\ll}{\mathfrak{l}}

\renewcommand{\aa}{\mathfrak{a}}
\newcommand{\uu}{\mathfrak{u}}

\renewcommand{\ss}{\mathfrak{s}}

\renewcommand{\sl}{\mathfrak{sl}}
\newcommand{\so}{\mathfrak{so}}
\renewcommand{\sp}{\mathfrak{sp}}

\newcommand{\cC}{\mathcal{C}}

\newcommand{\cO}{\mathcal{O}}

\newcommand{\cS}{\mathcal{S}}

\newcommand{\cU}{\mathcal{U}}

\newcommand{\R}{\mathbb{R}}

\newcommand{\C}{\mathbb{C}}

\newcommand{\B}{\mathrm{B}}
\newcommand{\E}{\mathrm{E}}

\newcommand{\pr}{\mathrm{pr}}

\newcommand{\Sl}{\mathrm{Sl}}
\newcommand{\Gl}{\mathrm{Gl}}
\newcommand{\So}{\mathrm{So}}
\newcommand{\Sp}{\mathrm{Sp}}

\renewcommand{\L}{\mathrm{L}}

\newcommand{\U}{\mathrm{U}}
\newcommand{\G}{\mathrm{G}}

\newtheorem{proposition}{Proposition}

\newtheorem{theorem}{Theorem}

\theoremstyle{definition} 
\newtheorem{example}{Example}
\newtheorem{remark}{Remark}

\title[Diagonalization of the Toda flow]{Diagonalization of the Toda flow  for \\ arbitrary isospectral symmetric matrices}
	 \author{David Mart\'inez Torres}
	 \address{Department of Applied Mathematics, ETSAM Section, Universidad
	 	Polit\'ecnica de Madrid,
	 	Avda. Juan de Herrera 4, 28040 Madrid, Spain}
	 \email{df.mtorres@upm.es}

	  \author{Carlos Tomei}
	 \email{carlos.tomei@mat.puc-rio.com}

\begin{document}

\begin{abstract} We construct  coordinates on orthogonal conjugacy classes of traceless real symmetric  matrices with arbitrary spectrum  (the isospectral manifold) that diagonalize the non-periodic Toda vector field. The coordinates, defined on a neighborhood of any diagonal matrix decouple the Toda vector field into a sum of multiples of the Euler vector field in $\R$. The domain of each set of coordinates is dense and  their union covers the isospectral manifold.  The construction relies on a new matrix factorization  which is of independent interest.
 \end{abstract}
\maketitle

\section{Introduction}

\medskip

 The celebrated Toda lattice was conceived originally  \cite{T} as a Hamiltonian on a chain of $n$ particles connected to neighbors by specific nonlinear springs\footnote{For  historical information concerning the Toda flow, see \cite{DLSTT}.}. A change of variables introduced by Flaschka  \cite{Fl} transformed the resulting dynamics into a vector field  on Jacobi matrices given by a Lax pair,
\begin{equation}\label{eq:Toda}
	X'=[X,\pi_\so X],\quad \mathrm{I}=\pi_\so+ \pi_{\uu},
\end{equation}
where the identity map $\mathrm{I}$ on $\mathfrak{sl}$ is decomposed as the sum of projections
onto traceless skew-symmetric matrices $\so$, and traceless upper triangular matrices $\uu$. Flaschka realized that the set of Jacobi matrices is invariant under the flow, as is the  isospectral manifold
\[\cO=\{ Q^T X Q\,|\, Q \in \So\}.\] This essentially yields the complete integrability of the Toda lattice for an appropriate symplectic structure on Jacobi matrices of zero trace. For  action variables given by the eigenvalues of the initial condition, Moser \cite{Mo} obtained angle variables --the so-called norming constants.

The form of  equation \eqref{eq:Toda} naturally leads to the consideration of larger phase spaces (\cite{DLNT1, DLT2}), in which integrability was also proved. More recently \cite{MT}, for
the Toda vector field on any semisimple Lie algebra, coordinates were constructed on dense domains of analogs of orthogonal conjugacy classes of real symmetric matrices, under a genericity condition corresponding to the simplicity of the spectrum of the initial condition.  In \cite{MT2} the symmetry requirement was removed (see also \cite{LMST} for the case of real matrices).

The main purpose of this text is to remove a systematic obstruction: We handle real symmetric matrices,  with possibly multiple eigenvalues. This is a wide generalization in terms of phase spaces. In the simple spectrum case the orthogonal conjugacy class of real symmetric matrices is the manifold of real full flags in Euclidean space,
\[\{(F_1,F_2, \dots ,F_{n-2},F_{n-1})\,|\,F_i\subset\R^n,\,\,F_i\subset F_{i+1},\,\,\mathrm{dim} F_i=i\}.\]
Allowing arbitrary spectrum means that the orthogonal conjugacy class of real symmetric matrices can be any manifold of partial flags,
\[\cO=\{(F_{d_1},\dots,F_{d_k})\,|\,F_{d_i}\subset\R^n,\,\,F_{d_i}\subset F_{d_{i+1}},\,\,\mathrm{dim} F_{d_i}=d_i\}.\]
For instance, if there are just two eigenvalues with multiplicities $d$ and $n-d$, then $\cO$ is the Grassmannian of $d$-planes in $\R^n$. For $d=1$ we have real projective space as phase space.

Our main result shows that the Toda vector field on any orthogonal conjugacy class of real symmetric matrices is obtained by juxtaposing diagonal linear vector fields defined on covering local coordinates.
To describe the result in more detail, one first observes that the equilibria of the Toda vector field \eqref{eq:Toda} in $\mathcal{O}$ are the traceless real  diagonal matrices in there. For each such matrix $\Lambda$ with entry $\lambda_i$ in  position $(ii)$, 
we define the following affine subspace of the space $\L$ of  unit real lower triangular  matrices,

\begin{equation}\label{eq:chart-domain}
 \L^\Lambda=\{L\in \L\,|\, L_{ij}=0,\,\,\mathrm{if}\,\,\lambda_i=\lambda_j,\,i\neq j\}.
\end{equation}
 Note that $\L^\Lambda$ is not a subgroup in general. 
 Moreover,
 for any traceless real diagonal matrix $D$ the (complete) linear vector field $X'=[X,D]$, $X\in \sl$, is tangent to $\L^\Lambda$ and is diagonal: each coordinate $X_{ij}$ evolves independently of the others.

\begin{theorem}\label{thm:main} Let  $\cO$ be an orthogonal conjugacy class of real symmetric matrices.
	Then around each diagonal matrix $\Lambda\in \cO$ there exist local coordinates mapping onto $\L^\Lambda$
that transform the Toda vector field in the  diagonal linear  vector field
  \begin{equation}\label{eq:Toda-coordinates}
  L'=[L,-\Lambda],\quad L\in \L^\Lambda.
  \end{equation}

The domain of definition of each set of local coordinates is dense, and the union over the (finitely many) diagonal matrices in $\cO$ of these domains covers $\cO$.
\end{theorem}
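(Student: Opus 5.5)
The plan is to linearize the Toda flow through the classical QR solution. I would then build the chart from a factorization of the eigenvector matrix that is adapted to the multiplicities of $\Lambda$.

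\emph{Step 1: QR solution.} By the classical QR procedure (Symes), the solution of \eqref{eq:Toda} with $X(0)=X_0$ is $X(t)=Q(t)^TX_0Q(t)$, where $\exp(tX_0)=Q(t)R(t)$ with $Q(t)\in\So$ and $R(t)$ upper triangular with positive diagonal. Write $X_0=V_0\Lambda V_0^T$ with $V_0\in\So$. Then
\[
Q(t)^T=R(t)\exp(-tX_0)=R(t)V_0e^{-t\Lambda}V_0^T .
\]
Hence $X(t)=V(t)\Lambda V(t)^T$ with
\[
V(t)=Q(t)^TV_0=R(t)\,V_0\,e^{-t\Lambda}.
\]
The eigenvector matrix $V$ is determined only up to right multiplication by the group $K_\Lambda$ of orthogonal matrices commuting with $\Lambda$. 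This group consists of block-orthogonal matrices on the eigenspaces, and it is exactly the source of the difficulty when the spectrum is multiple.

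\emph{Step 2: the factorization.} I would prove that on an open dense set of $\Gl$ every matrix factors uniquely as
\[
V=U\,L\,K,\qquad U \text{ upper triangular},\ L\in\L^\Lambda,\ K\in K_\Lambda,
\]
up to the finite sign ambiguity absorbed into $U$. The natural route is a block version of the LU (Gauss) decomposition. First perform a block-LU factorization relative to the eigenvalue blocks, which is possible when the leading principal minors of the relevant sizes do not vanish. Then, inside each diagonal block, run a QR step that moves the orthogonal part into $K$ and the triangular part into $U$. This kills the entries $L_{ij}$ with $\lambda_i=\lambda_j$, $i\neq j$. A dimension count supports the claim: $\dim \L^\Lambda=\dim\cO$, the number of pairs $i>j$ with $\lambda_i\neq\lambda_j$. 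This is the step I expect to be the main obstacle, specifically uniqueness of the factorization and the smooth dependence of $L$ on $V$.

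\emph{Step 3: the chart and linearization.} Define the chart by $X=V\Lambda V^T\mapsto L$. Uniqueness in Step 2, together with $LK'=K'(K'^{-1}LK')$ for $K'\in K_\Lambda$, shows that $L$ depends only on $X$. Moreover $K_\Lambda$ preserves $\L^\Lambda$ under conjugation and the $U$-factor absorbs the rest; this is the point to verify carefully. The inverse is explicit: given $L\in\L^\Lambda$, take the factorization $L=U'O$ with $U'$ upper triangular and $O\in\So$, and send $L\mapsto O\Lambda O^T$. This map is smooth and defined on all of $\L^\Lambda$, so the chart maps onto $\L^\Lambda$.

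Plugging $V_0=U_0L_0K_0$ into the formula of Step 1, and using that $K_0$ commutes with $e^{-t\Lambda}$, gives
\[
V(t)=\bigl(R(t)\,U_0\,e^{-t\Lambda}\bigr)\bigl(e^{t\Lambda}L_0e^{-t\Lambda}\bigr)K_0 .
\]
The first factor is upper triangular and the middle factor lies in $\L^\Lambda$. By uniqueness, $L(t)=e^{t\Lambda}L_0e^{-t\Lambda}$, so $L'=\Lambda L-L\Lambda=[L,-\Lambda]$, which is \eqref{eq:Toda-coordinates}. In particular the domain is flow-invariant.

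\emph{Step 4: density and covering.} The domain is the complement of a zero set of minors, so it is open and dense. The other diagonal matrices in $\cO$ are $P\Lambda P^T$ with $P$ a permutation. Their charts correspond to factorizations of $PV$. By the Bruhat decomposition, every $V$ admits such a factorization for some $P$, which amounts to a pivoted block LU. Hence the finitely many domains cover $\cO$.
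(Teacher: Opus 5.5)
Your Steps 1 and 3 are sound. The identity $V(t)=R(t)V_0e^{-t\Lambda}$ is exactly the Symes computation behind Proposition~\ref{pro:Toda}, and your chart is $L\mapsto\Psi(L^{-1})$ in the paper's notation. The anti-automorphism $A\mapsto w_0A^Tw_0$ ($w_0$ the order-reversing permutation) carries $\U\L^\Lambda K_\Lambda$ onto the paper's $\mathrm{O}_{\Lambda'}\L^{\Lambda'}\U$ with $\Lambda'=w_0\Lambda w_0$. So the statement of Step 2 is true, provided you require $U$ to have positive diagonal; otherwise $ULK=(UD)(DLD)(DK)$ for sign matrices $D$, and $L$ is not well defined.

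\textbf{Gap in Step 2.} Step 2 is the entire difficulty, and the route you sketch does not reach it. A block LU ``relative to the eigenvalue blocks'' followed by QR inside the diagonal blocks only makes sense when each eigenvalue occupies consecutive positions. In that case $\L^\Lambda$ is a group and the factorization is easy; the paper derives it from $\sl=\so_\Lambda\oplus\ll^\Lambda\oplus\uu$. When equal eigenvalues are not contiguous:
\begin{enumerate}
\item $\L^\Lambda$ is not a group, and no block decomposition is compatible with triangularity;
\item the inverse function theorem only gives uniqueness near the identity;
\item the domain is not cut out by minors.
\end{enumerate}
Such $\Lambda$ cannot be avoided. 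The theorem asserts a chart at every diagonal matrix of $\cO$. Moreover, in Example~\ref{ex:RP2} the saddle $\mathrm{diag}(\lambda,-2\lambda,\lambda)$ lies in neither of the other two charts: there both coordinates have rates of the same sign, so every point of those domains flows to or from the center. What the paper supplies is a new algorithm. It builds the rows $r_1,\dots,r_n$ of $Q^T\in\mathrm{O}_\Lambda$ one at a time: $r_{i+1}$ is the normalized projection, onto the eigenspace containing $e_{i+1}$, of a combination of $c_1,\dots,c_{i+1}$ (with positive last coefficient) annihilated by $r_1,\dots,r_i$. This amounts to Gram--Schmidt run separately inside each eigenspace, and the obstructions are polynomials $p_i$ in the Gram entries $\pr_s(c_j)\cdot\pr_s(c_k)$.

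\textbf{Gap in Step 4.} The Bruhat decomposition only writes $V=TLP$ with $T$ upper triangular, $P$ a permutation, and $L\in\L$ an \emph{unconstrained} unit lower triangular matrix. Such cells are not contained in the constrained domains. Take $\Lambda=\mathrm{diag}(\lambda,-2\lambda,\lambda)$ and $V=\left(\begin{smallmatrix}1&0&0\\1&1&0\\1&2&1\end{smallmatrix}\right)$. It is unit lower triangular, with all leading and trailing principal minors equal to $1$, yet $V\notin\U\L^\Lambda K_\Lambda$:
\begin{enumerate}
\item $VK^{-1}=UL$ with $L_{31}=0$ forces $(VK^{-1})_{31}=0$, which determines $K$ up to signs;
\item for each such $K$ the trailing $2\times2$ minor of $VK^{-1}$ vanishes;
\item that minor is nonzero for every $UL$ with $U$ invertible.
\end{enumerate}
Replacing $V$ by its orthogonal factor $O$ in $V=U_0O$ gives a point $O\Lambda O^T\in\cO$ in the big cell but outside your chart.

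The paper's covering argument is dynamical instead, and you already have its key input. The function $\mathrm{tr}(AX)$ is a strict Lyapunov function on the compact $\cO$, so every point converges to some diagonal $\Lambda'$, which is hyperbolic. In the linear chart at $\Lambda'$ the stable manifold is a coordinate subspace. Since the chart domain is flow invariant (your Step 3), the whole stable manifold, hence the point, lies in it.

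\textbf{Minor error.} Your remark that $K_\Lambda$ preserves $\L^\Lambda$ under conjugation is false: a rotation inside a multidimensional eigenspace destroys triangularity. It is also unnecessary, since $VK'=UL(KK')$ already shows that $L$ depends only on $X$.
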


 In other words, the evolution decouples in the different coordinates of $L$,
 \[L'_{ij}=(\lambda_i-\lambda_j)L_{ij}.\]
 
 As we shall see, the choice of $\L^\Lambda$ removes the ambiguity in the description as conjugates of elements of $\cO$ in a neighborhood of $\Lambda$, 
 \[ Q^T \Lambda Q,\quad Q=Q(L),\quad L\in \L^\Lambda \, ,\] 
 which is caused by the abundance of matrices commuting with the diagonal matrix $\Lambda$ with non-simple spectrum. 
 
 Even in the case of Grassmanians, the atlas granted by Theorem \ref{thm:main} is new.

%

\medskip
Theorem \ref{thm:main} is   known in the simple spectrum case \cite {LMST}, where the target of the coordinates is the group $\L$. As it turns out, the chart (inverse of the coordinates) used in the simple spectrum case $\Psi:\L\longrightarrow\cO$ already provides a diagonal linear lift for the Toda vector field around a diagonal matrix $\Lambda\in \cO$, regardless of whether its spectrum is simple or not. However, if the spectrum is not simple the dimension of $\L$ exceeds the dimension of $\cO$. The proof of Theorem \ref{thm:main} hinges on proving  that the restriction of $\Psi$ to $\L^\Lambda$ is a diffeomorphism onto its image. This is done by means of a constrained factorization result, which we find of independent interest.
To state it we introduce the subgroup of orthogonal real matrices which commute with $\Lambda$,
\[\So_\Lambda=\{Q\in \So\,|\, Q^T\Lambda Q=\Lambda\}.\]
 We denote by $\U\subset \Sl$ the group of determinant one upper triangular real matrices with positive diagonal entries.
\begin{theorem}\label{thm:constrained-QLU} There exist a unique factorization depending smoothly on parameters
 \[\So_\Lambda \L^\Lambda  \U\subset \Sl,\]
 valid for those matrices $X\in \Sl$ in the complement of the solution set of a polynomial equation in the entries of $X$.
\end{theorem}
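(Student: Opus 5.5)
Reorder the eigenvalues so that equal ones are consecutive. This amounts to conjugating by a permutation matrix, which is harmless if the statement is phrased for general $\Lambda$. Then $\So_\Lambda$ consists of block-diagonal orthogonal matrices $S(\mathrm{O}(n_1)\times\dots\times \mathrm{O}(n_k))$, up to the determinant condition. The set $\L^\Lambda$ consists of unit lower triangular matrices whose diagonal blocks are the identity and whose strictly-below-block entries are free. The plan has two steps: first reduce to a block $\L\U$-type factorization, then handle the diagonal blocks by a $QR$ (Gram--Schmidt) factorization inside each block.

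\textbf{Step 1 (block LU).} Let $\L_b$ be the group of block unit lower triangular matrices, with identity diagonal blocks, so that $\L_b=\L^\Lambda$ as sets. Let $P_b$ be the group of block upper triangular matrices with invertible diagonal blocks and total determinant one. The classical block Gauss elimination gives a unique factorization $X=L\,P$ with $L\in\L^\Lambda$ and $P\in P_b$. It is valid exactly when the leading principal minors of $X$ of sizes $d_1=n_1$, $d_2=n_1+n_2,\dots$ are nonzero. The factors are rational functions of $X$, so they are smooth. Uniqueness holds because $\L_b\cap P_b=\{\mathrm{I}\}$.

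\textbf{Step 2 (absorb block-diagonal part).} Write $P=D\,N$ with $D=\mathrm{diag}(D_1,\dots,D_k)$ block-diagonal and $N$ block unipotent upper triangular. Apply Gram--Schmidt inside each block: $D_i=Q_iR_i$ with $Q_i\in \mathrm{O}(n_i)$ and $R_i$ upper triangular with positive diagonal. The signs and determinants need care. The product of the $\det D_i$ is $1$, so the product of the $\det Q_i$ is positive. The determinant-one normalization of $R=\mathrm{diag}(R_i)$ can be arranged by rescaling, since $\det R>0$ and in fact $\det R=1$ once $\prod\det Q_i=1$. Set $Q=\mathrm{diag}(Q_i)\in\So_\Lambda$. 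The key observation is that $\So_\Lambda$ normalizes $\L^\Lambda$: conjugating a block unit lower triangular matrix by a block-diagonal matrix keeps it block unit lower triangular. Hence
\[X=L\,Q\,R\,N=Q\,(Q^{-1}LQ)\,(RN),\]
with $Q^{-1}LQ\in\L^\Lambda$ and $RN\in\U$. This gives existence on the complement of the zero set of the polynomial $\prod_j \det X_{[d_j]}$, the product of the leading block minors.

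\textbf{Step 3 (uniqueness and smoothness).} Suppose $Q L U=Q'L'U'$. Then $L''U''=Q''$, where $Q''=Q'^{-1}Q\in\So_\Lambda$ is block diagonal, $L''\in\L^\Lambda$ after using the normalization, and $U''$ is upper triangular. Rewrite this as $(Q''^{-1}L''Q'')\,(Q''^{-1}U'')=\mathrm{I}$. The first factor lies in $\L_b$ and the second in $P_b$, so both equal $\mathrm{I}$ by Step 1. Then $U''=Q''$ is both orthogonal and upper triangular with positive diagonal, hence $Q''=\mathrm{I}$. Smoothness follows because block LU and Gram--Schmidt are smooth, in fact real-analytic, on their domains.

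The main obstacle I expect is bookkeeping with the $\So$ versus $\mathrm{O}$ sign issue. Requiring $Q\in\So_\Lambda$ rather than allowing block orthogonal factors of arbitrary sign forces $\det Q=1$. This should be automatic here, since $\det X=1$ and $\det(LU)>0$. One also has to check that the ordering permutation is compatible with lower triangularity when equal eigenvalues are not consecutive. In that case I would instead work directly with the polynomial condition given by the minors obtained from grouping indices by eigenvalue.
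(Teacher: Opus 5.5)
\textbf{The reduction to consecutive eigenvalues is invalid.} Your Steps 1--3 are correct when the repeated eigenvalues of $\Lambda$ occupy consecutive positions. However, the issue you defer to your last paragraph is not bookkeeping; it is the heart of the theorem.

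Conjugation by a permutation $P$ carries $\So_\Lambda$ to $\So_{P\Lambda P^T}$. It does not carry $\L^\Lambda$ to $\L^{P\Lambda P^T}$, nor $\U$ to $\U$, because triangularity refers to the fixed order of the indices. So the factorizations for $\Lambda$ and $P\Lambda P^T$ are genuinely different. This is why the paper counts Bell-number many of them, and Theorem 1 needs the factorization at every diagonal matrix of $\cO$, e.g.\ the saddle $\mathrm{diag}(\lambda,-2\lambda,\lambda)$ of Example 1.

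For that $\Lambda$, $\L^\Lambda$ is cut out by $L_{31}=0$; write $a=L_{21}$, $b=L_{32}$. Every ingredient of your argument then fails:
\begin{enumerate}
\item $\L^\Lambda$ is not a group: a product of two elements has $(3,1)$ entry $b_1a_2$.
\item There is no block $\L\U$ decomposition adapted to it.
\item $\So_\Lambda$ does not normalize it. Take a rotation of angle $\theta$ in the $(e_1,e_3)$-plane, which commutes with $\Lambda$. Conjugating $L$ by it creates entries proportional to $b\sin\theta$ and $a\sin\theta$ in positions $(1,2)$ and $(2,3)$.
\end{enumerate}
So the identity $X=Q(Q^{-1}LQ)(RN)$ is unavailable. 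What you proved is the contiguous case. The paper mentions this as the easy case: it follows from $\sl=\so_\Lambda\oplus\ll^\Lambda\oplus\uu$ with $\ll^\Lambda$ a subalgebra, and the paper adds that such charts do not suffice to cover.

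\textbf{Your fallback gives the wrong domain, and the key idea is missing.} Using minors obtained by grouping indices by eigenvalue does not describe the correct domain either. Write $x_{jk}$ for the entries of $X$ and take the same $\Lambda$. The condition $L_{31}=0$, together with orthonormality of the rows of $Q^T$ inside $V_1=\mathrm{span}(e_1,e_3)$, forces the first row of $Q^T$ to be $\pr_1(c_1)/\|\pr_1(c_1)\|$. Hence the first obstruction is $x_{11}^2+x_{31}^2\neq 0$, and the second is $(x_{11}^2+x_{31}^2)x_{22}-(x_{11}x_{12}+x_{31}x_{32})x_{21}\neq 0$. Neither is a minor of $X$, since minors contain no squared entries.

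The missing idea is the paper's column-by-column solution of $Q^TXY=L$, with $Y=U^{-1}$. At step $i+1$, the first $i$ equations single out a combination $t_1c_1+\dots+t_ic_i+|p_i|c_{i+1}$ of the columns of $X$. The prescribed zeros in the $(i+1)$-th column of $L$, together with orthonormality inside $V_{(i+1)}$, then force the $(i+1)$-th row of $Q^T$ to be the normalized projection of that combination onto $V_{(i+1)}$.

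This interleaves Gram--Schmidt inside each $V_j$ with elimination in the global index order. The obstructions are the Gram-type polynomials $p_i$ in the inner products $y_{s,jk}=\pr_s(c_j)\cdot\pr_s(c_k)$, and uniqueness holds because every choice is forced.
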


 As $\Lambda$ ranges over diagonal matrices in $\cO$, Theorem \ref{thm:constrained-QLU} produces as many different factorizations as the $n$-th Bell number, where $n$ is the size of $\Lambda$.
Two of these cases are standard.
\begin{itemize}
 \item If $\Lambda$ has a unique eigenvalue, then $\So_\Lambda=\So$, $\L^\Lambda=\{\mathrm{I}\}$, and we recover the $\mathrm{QR}$ factorization, valid everywhere in $\Sl$.
 \item If $\Lambda$ has simple spectrum, then $\So_\Lambda$ are  (diagonal) sign matrices, $\L^\Lambda=\L$, and  we recover the (signed) $\L\U$ factorization, valid for matrices with nonzero principal minors.
\end{itemize}


Theorems \ref{thm:main} and \ref{thm:constrained-QLU} also hold for matrices  with complex or quaternionic entries, where the special orthogonal group is replaced by the special unitary group.

\subsection*{ Acknowledgments} The authors thank Froilan Dopico, Ricardo Leite and Nicolau Saldanha for extensive conversations,  and Qiyuan Gu for kindly providing  the counterexample in Section \ref{ssec:Lie-theory}. DMT and CT acknowledge  finantial support by CNPq and FAPERJ. DMT and CT also acknowledge respectively partial finantial support  by MCIN-AEI grant PID2022-139069NB-I00, and by StoneLab.

\section{Factorizations and diagonalizing coordinates}\label{sec:cx}

All matrices in this section are real.

We start by reminding the reader basic properties of the $\mathrm{QR}$ and the $\L\U$ factorizations of matrices, paying attention to  differential topology aspects.
Then we show how to extract from these factorizations the coordinates in Theorem \ref{thm:main} by means of a constrained $\mathrm{Q}\L\U$ factorization.

\medskip

 In  matrix theory a factorization in a subgroup $\mathrm{G}$ of the general linear group with factors $\mathrm{A},\mathrm{B}\subset \mathrm{G}$ say,  is
 a procedure that expresses a matrix $G$ in an appropriate subset of $\mathrm{G}$ as
 \[G=AB,\quad A\in \mathrm{A},\,B\in \mathrm{B}.\]
We are concerned with factorizations with the following additional properties: uniqueness and smooth dependence on parameters. By this we mean that
 \begin{itemize}
  \item  if $G=AB$ is given, the factorization returns the factors $A,B$;
  \item the domain of definition $\mathrm{AB}\subset \mathrm{G}$ is open and $\mathrm{A}$, $\mathrm{B}$ are (embedded) submanifolds of $\mathrm{G}$.
 \end{itemize}

  These properties can be restated in the language of differential topology: There exists submanifolds $\mathrm{A}, \B$, typically subgroups, such that the product map
\[\mathrm{A}\times \B \longrightarrow \G,\quad (A,B)\mapsto AB,\]
is a diffeomorphism onto its image $\mathrm{AB}\subset \G$ which is an open subset, typically the entire group $\G$. The factorization procedure is the explicit construction of the inverse to the product map.

Factorizations may have more than two factors. For example, the factorization in the statement of Theorem \ref{thm:constrained-QLU} has three factors and is not surjective.

A standard  factorization in the special linear group is the $\mathrm{QR}$ factorization
\[\So \U=\Sl,\]
where $\U$ denotes the group of determinant one upper triangular matrices  with positive diagonal entries. The factorization is obtained, for instance, from the Gram-Schmidt algorithm.

We also consider the $\L\U$ factorization in the special linear group. Its domain of definition is the subset of matrices with strictly positive principal minors,
\[\L\U\subset \Sl.\]
We are  interested in the intersection of the image $\L\U$ with $\So$, in the open subset $\cC\subset \So$ of special orthogonal matrices with strictly positive principal minors,
\[\cC=\{Q\in \So\,|\,\exists L\in \L,\,U\in \U,\, Q=LU\}.\]
We will be using a  simple property of  $\cC$ (see \cite[Corollary 1]{MT}): $\L$ is  diffeomorphic to $\cC$ by means of  the restriction
to $\L$ of the first projection in the $\mathrm{QR}$ factorization $\kappa:\Sl\to \So$,

\begin{equation}\label{eq:first-projection}\kappa:\L\longrightarrow \cC.
 \end{equation}

Let $\Lambda$ be a traceless diagonal matrix and let $\cO$ be its orthogonal conjugacy
class, i.e., the isospectral manifold consisting of (real) symmetric matrices
\[\cO=\{Q^T\, \Lambda\,  Q\,|\, Q\in \So\}.\]
The Toda vector field on $\sl$ is tangent to $\cO$, and its equilibria there   are diagonal matrices. A result of Symes \cite{Sy,Sy2} provides the almost explicit calculation of the trajectories of the Toda vector field everywhere on $\sl$. Specifically, the trajectory starting at a symmetric matrix $X$ is given by
 \begin{equation}\label{eq:Toda-solution-Iwasawa-sl}
 \kappa(\mathrm{e}^{tX})^T  X  {\kappa(\mathrm{e}^{tX})},\quad t\in\R.
 \end{equation}

We define the map
\begin{equation}\label{eq:fat-chart}\Psi:\L\longrightarrow\cO,\quad L\mapsto \kappa(L)^T\, \Lambda\, \kappa(L).
 \end{equation}
The affine space $\L$ is canonically isomorphic to the vector space of strictly lower triangular matrices: Non-diagonal entries are linear variables (more conceptually,  one chooses the identity matrix as distinguished point/origin in the affine space $\L$). With this in mind, the vector field
\begin{equation}\label{eq:diagonal-linear}L'=[L,-\Lambda],\quad L\in \L,
\end{equation}
is a  diagonal linear vector field. Its trajectory starting at $L$ is given by conjugation by $\mathrm{e}^{t\Lambda}$.

The following proposition is a  known consequence of Symes' result.
\begin{proposition}\label{pro:Toda}
The map $\Psi: \L\to \cO $ takes  the trajectories of the action of $\mathrm{e}^{t\Lambda}$ on $\L$ by conjugation to the trajectories of the Toda vector field,
\[\Psi(\mathrm{e}^{t\Lambda}L\mathrm{e}^{-t\Lambda})=\kappa(\mathrm{e}^{tX})^T X  \kappa(\mathrm{e}^{tX}),
\quad X=\kappa(L)^T\Lambda \kappa(L).\]
Equivalently, the vector field  $L'=[L,-\Lambda]$  on $\L$
can be pushed forward by $\Psi$ and its image is the Toda vector field.
\end{proposition}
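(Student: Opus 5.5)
The plan is to reduce everything to Symes' formula \eqref{eq:Toda-solution-Iwasawa-sl} together with two elementary identities: one for how the $\mathrm{QR}$ projection $\kappa$ interacts with conjugation by $\mathrm{e}^{t\Lambda}$, and one for how it interacts with left multiplication by orthogonal matrices. The computation proceeds as follows.

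Fix $L\in\L$ and write $Q=\kappa(L)$, so that $L=QR$ with $R\in\U$, and set $X=Q^T\Lambda Q$. Since $\mathrm{e}^{tX}=Q^T\mathrm{e}^{t\Lambda}Q$, we obtain
\[
\mathrm{e}^{tX}=Q^T\mathrm{e}^{t\Lambda}LR^{-1}=Q^T\bigl(\mathrm{e}^{t\Lambda}L\mathrm{e}^{-t\Lambda}\bigr)\mathrm{e}^{t\Lambda}R^{-1}.
\]
The matrix $\mathrm{e}^{t\Lambda}R^{-1}$ lies in $\U$: it is upper triangular with positive diagonal, and it has determinant one because $\Lambda$ is traceless. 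Next write $\mathrm{e}^{t\Lambda}L\mathrm{e}^{-t\Lambda}=\kappa(\mathrm{e}^{t\Lambda}L\mathrm{e}^{-t\Lambda})\,R_t$ with $R_t\in\U$. Two facts about the $\mathrm{QR}$ factorization now apply. First, $\kappa(AU)=\kappa(A)$ for $U\in\U$, since $\U$ is a group. Second, $\kappa(PA)=P\kappa(A)$ for $P\in\So$, by uniqueness of the factorization. Together these give
\[
\kappa(\mathrm{e}^{tX})=Q^T\kappa(\mathrm{e}^{t\Lambda}L\mathrm{e}^{-t\Lambda}).
\]
Substituting into $\kappa(\mathrm{e}^{tX})^TX\kappa(\mathrm{e}^{tX})$, the factors $QQ^T$ cancel and the expression becomes $\kappa(\mathrm{e}^{t\Lambda}L\mathrm{e}^{-t\Lambda})^T\Lambda\,\kappa(\mathrm{e}^{t\Lambda}L\mathrm{e}^{-t\Lambda})$. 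This is exactly $\Psi(\mathrm{e}^{t\Lambda}L\mathrm{e}^{-t\Lambda})$, where we use that $\L$ is invariant under conjugation by diagonal matrices.

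By Symes' result this curve is the Toda trajectory starting at $X=\Psi(L)$. Differentiating at $t=0$, the derivative of $\mathrm{e}^{t\Lambda}L\mathrm{e}^{-t\Lambda}$ is $[\Lambda,L]=[L,-\Lambda]$. Hence $d\Psi$ maps the linear vector field \eqref{eq:diagonal-linear} to the Toda field at $\Psi(L)$, which is the push-forward statement. Smoothness of $\Psi$ follows from smoothness of $\kappa$.

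The main point requiring care is the justification of the two $\kappa$-identities, that is, uniqueness of $\mathrm{QR}$ with positive diagonal in $\U$. A secondary point is to check $\det(\mathrm{e}^{t\Lambda}R^{-1})=1$, so that this factor genuinely lies in $\U$; this is where the traceless hypothesis on $\Lambda$ is used.
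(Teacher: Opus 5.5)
Your argument is correct and is the derivation the paper has in mind. The paper gives no separate proof and states the proposition as a known consequence of Symes' formula \eqref{eq:Toda-solution-Iwasawa-sl}; your identity $\kappa(\mathrm{e}^{tX})=\kappa(L)^T\kappa(\mathrm{e}^{t\Lambda}L\mathrm{e}^{-t\Lambda})$, obtained from uniqueness of the $\mathrm{QR}$ factorization and $\mathrm{e}^{t\Lambda}R^{-1}\in\U$ (using $\mathrm{tr}\,\Lambda=0$), followed by differentiation at $t=0$ to get $\Psi$-relatedness (the right reading of ``push-forward'' since $\Psi$ need not be injective), supplies exactly the omitted details.
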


For a traceless diagonal matrix $\Lambda$ of simple spectrum,   it is known that $\Psi$ defined in  \eqref{eq:fat-chart} is a diffeomorphism onto its image, a dense open subset of $\cO$. Proposition \ref{pro:Toda} in this setting says that $\Psi$ is a change of coordinates that linearizes the Toda flow. We sketch the proof to highlight the role of factorization results. 

The map $\Psi$ is the composition
\[\L\overset{\kappa}{\longrightarrow } \So\overset{f}{\longrightarrow} \cO,\quad f(Q)=Q^T\Lambda Q,\]
of the first projection of the $\mathrm{QR}$ factorization in $\L$ with the anti-conjugation map. As observed, the first map is a diffeomorphism onto its image $\cC\subset \So$. It suffices to show that $f:\cC\to\cO$ is a diffeomorphism onto its image
\[\cU=\{Q^T\Lambda Q\,|\, Q\in \cC\}\subset \cO.\]
This can be done explicitly. However, and for the sake of the generalization to the non-simple spectrum case, we do it by looking at how the anti-conjugation map relates to a suitable factorization in $\So$. More precisely,  the preimage of $\cU$ by $f:\So\to \cO$ is the open and dense subset of $\So$ whose elements are matrices with nonzero minors, which factors as $\E\cC$, where $\E$ is the group of determinant one diagonal sign matrices. Because $\E$ are exactly the matrices in $\So$ that commute with $\Lambda$ --- the centralizer of $\Lambda$ in $\So$ --- it follows that the restriction
\[f:\E\cC\longrightarrow\cU\]
is a (trivial) fibration whose (finite) fibers are the right translates $\E Q$, $Q\in \cC$, of $\E$. This has two consequences. On the one hand $f:\cC\to\cU$ is a diffeomorphism because $\cC\subset \E\cC$ is a section to the fibration. On the other hand
\[\cU=f(\cC)=f(\E\cC)\] implies that $\cU$ is an open and dense subset of $\cO$.

In the case of multiple spectrum the map $\Psi$ ceases to be injective from counting dimensions. More precisely, the centralizers of $\Lambda$ in $\So$ and $\L$, that we denote by $\So_\Lambda$ and $\L_\Lambda$, respectively,
\begin{equation}\label{eq:lower-triangular-centralizer}
\So_\Lambda=\{Q\in \So\,|\, Q_{ij}=0,\,\,\mathrm{if}\,\,\lambda_i\neq \lambda_j\},\quad \L_\Lambda=\{L\in \L\,|\, L_{ij}=0,\,\,\mathrm{if}\,\, \lambda_i\neq \lambda_j\},
\end{equation}
are larger than $\E$ and the identity, respectively. The factors of the  $\mathrm{QR}$ factorization of a matrix that commutes with $\Lambda$ also commute with $\Lambda$. This implies that   $\kappa^{-1}(\So_\Lambda)\cap \L=\L_\Lambda$, from which  it follows
that
\[\Psi^{-1}(\Lambda)=\L_\Lambda,\]
and therefore $\Psi$ fails to be injective around the identity.

To look for coordinates for $\cO$ taking the diagonal linear vector field \eqref{eq:diagonal-linear} to the Toda vector field, it is natural to look for a submanifold of $\L$ which is tangent to \eqref{eq:diagonal-linear}, and to which $\Psi$ restricts to a diffeomorphism onto its image. The kernel of the differential of $\Psi$ at the identity is the tangent space to $\L_\Lambda$,
\begin{equation*}
\{L\in \L\,|\, L_{ij}=0,\,\,\mathrm{if}\,\, \lambda_i\neq \lambda_j\,\,\mathrm{or}\,\,i=j\}.
\end{equation*}
The affine subspace introduced in \eqref{eq:chart-domain}
\[\L^\Lambda=\{L\in \L\,|\, L_{ij}=0,\,\,\mathrm{if}\,\, \lambda_i=\lambda_j,\,i\neq j\}\]
is complementary to the kernel. Therefore the restriction
\[\Psi:\L^\Lambda\longrightarrow\cO\]
is a diffeomorphism around the identity.

To go from local to global diffeomorphism we use  Theorem \ref{thm:constrained-QLU}: The factorization
\[\So_\Lambda \L^\Lambda  \U\subset \Sl\]
holds in the complement in $\Sl$ of the solution set of a polynomial equation.
%
%

\begin{proof}[Proof of Theorem \ref{thm:main}]
Suppose that the constrained $\mathrm{Q}\L\U$ factorization holds. The first projection of the $\mathrm{QR}$ factorization  $\kappa:\Sl\to  \So$ to $\So_\Lambda\L^\Lambda\U$ yields
 \[\kappa(\So_\Lambda\L^\Lambda\U)=\So_\Lambda \kappa(\L^\Lambda),\]
a factorization of $\So_\Lambda$ defined  in  the complement of the solution set of a polynomial equation (the restriction to $\So_\Lambda$ of homogeneous polynomial coming from the constrained $\mathrm{Q}\L\U$ factorizaton). Notice that   $\So_\Lambda \kappa(\L^\Lambda)$ is different from  $\cC=\kappa(\L)$.

As
\begin{itemize}\item $\kappa:\L^\Lambda\to\kappa(\L^\Lambda)$ is a diffeomorphism,   \item $\Psi$ is the result of applying anti-conjugation to $\Lambda$ by elements in $\kappa(\L^\Lambda)$,\item   $\So_\Lambda$ is the centralizer of $\Lambda$ in $\So$,
 \item
 the (unique and smooth) factorization $\So_\Lambda \kappa(\L^\Lambda)\subset \So$ holds,
 \end{itemize}
 the same argument that we sketched for the simple spectrum case implies
that
\[\Psi:\L^\Lambda\longrightarrow\cO\]
is a diffeomorphism onto its image $\mathcal{V}\subset \cO$. Because $\So_\Lambda\kappa(\L^\Lambda)\subset \So$ is open and dense, then so is $\mathcal{V}\subset \cO$.

As $\L^\Lambda$ is a coordinate affine subspace of $\L$, the diagonal linear vector field  \eqref{eq:diagonal-linear} is tangent to
        $\L^\Lambda$. Therefore
Proposition \ref{pro:Toda} implies that in the local coordinates $\Psi^{-1}:\mathcal{V}\to \L^\Lambda$  the Toda vector field equals \eqref{eq:diagonal-linear}.


We argue that
$\cO$ is also covered by the union of the domains of the local coordinates centered at each diagonal matrix by using a result from dynamical systems: If a vector field on compact manifold has hyperbolic zeroes and has a strict Lyapunov function, then the union of its stable (or unstable) manifolds over its zeroes covers the manifold. Back to our setting,
\begin{itemize}
 \item the conjugacy class $\cO$ is compact,
 \item the Toda vector field on $\cO$ has hyperbolic zeroes, the diagonal matrices in the conjugacy class,
 \item  in the linear coordinates centered at a diagonal matrix  the stable manifold is a vector subspace and therefore it sits in the domain of the coordinates,
 \item the restriction to $\cO$ of the linear function
 \[f(X)=\mathrm{tr}(AX)\]
 where a is a fixed diagonal matrix with different strictly positive entries ordered increasingly, is a strict Lyapunov function for the Toda vector field  \cite[Lemma 1]{Fa}.
\end{itemize}
\end{proof}

 \begin{example}\label{ex:RP2} Let $\Lambda$ be the diagonal $3\times 3$ matrix with ordered entries $\{\lambda,-2\lambda,\lambda\}$, where $\lambda\in \mathbb{R}$ is nonzero. Its orthogonal conjugacy class
 is diffeomorphic to the projective plane $\mathbb{RP}^2$. It contains the  diagonal matrices
  with ordered entries $\{-2\lambda,\lambda,\lambda\}$,  $\Lambda$ and $\{\lambda,\lambda,-2\lambda\}$, which are a sink, a saddle and a source for the Toda vector field, respectively.
\begin{center}
\begin{figure}[H]
\includegraphics[scale=0.9]{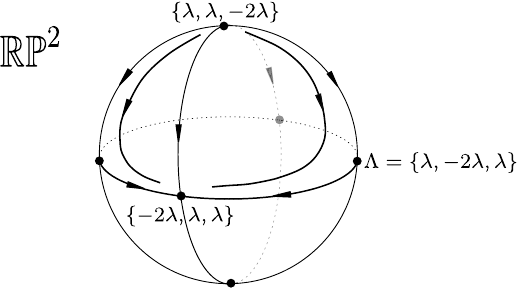}
 \caption{The Toda vector field on  $\mathbb{RP}^2$.}
\end{figure}
\end{center}
We check that the chart $\Psi$ centered around $\Lambda$ sends the diagonal vector field to the Toda vector field.
The domain $\L^\Lambda$ of $\Psi$ is a plane
\[\L^\Lambda=\left\{L=\begin{pmatrix} 1 & 0 & 0\\ x & 1 & 0\\ 0 & y & 1\end{pmatrix},\,\,x,y\in \R\right\},\]
and the diagonal linear vector field there  $L'=[L,-\Lambda]$ equals
\[\begin{pmatrix} 0 & 0 & 0\\ x' & 0 & 0\\ 0 & y' & 0\end{pmatrix}= 3 \lambda \begin{pmatrix} 0 & 0 & 0\\ - x & 0 & 0\\ 0 &  y & 0\end{pmatrix}.\]
The orthogonal factor of $L$ in the $\mathrm{QR}$ factorization equals
\[\kappa(L)=\begin{pmatrix} \dfrac{1}{n_1} & -\dfrac{x}{n_1n_2} & \dfrac{xy}{n_2}\\ \dfrac{x}{n_1} & \dfrac{1}{n_1n_2} & -\dfrac{y}{n_2}\\ 0 & \dfrac{n_1y}{n_2} & \dfrac{1}{n_2}\end{pmatrix},\quad n_1=\sqrt{1+x^2},\,n_2=\sqrt{1+y^2+x^2y^2},\]
and the conjugation of $\Lambda$ by $\kappa(L)^T$ is
\[\Psi(L)= \lambda \begin{pmatrix} \dfrac{(1-2x^2)}{n_1^2} & -\dfrac{3 x}{n_1^2n_2} & \dfrac{3 xy}{n_1n_2}\\ -\dfrac{3 x}{n_1^2n_2} & \dfrac{(-2+x^2+y^2(1+x^2)^2)}{n_1^2n_2^2} & \dfrac{3 y}{n_1n_2^2}\\ \dfrac{3 xy}{n_1n_2} & \dfrac{3 y}{n_1n_2^2} & \dfrac{(1-2y^2+x^2y^2)}{n_2^2}\end{pmatrix}\]
The Toda vector field  $[\Psi(L),\pi_{\so}\Psi(L)]$  equals
	\[9 \lambda^2 \begin{pmatrix} \frac{2 x^2}{n_1^4} &-\frac{ x(-1+x^2-(2+x^2-x^4)y^2)}{n_1^4n_2^{3}}  & \frac{xy(x^2+(-1+x^4)y^2)}{n_1^{3}n_2^{3}} \\ 
		- \frac{ x(-1+x^2-(2+x^2-x^4)y^2)}{n_1^4n_2^{3}} & -\frac{2(x^2-(1-x^4)y^2)}{n_1^4n_2^4} &\frac{(1-y^2+x^2(2+y^2+2x^2y^2)) y}{n_1^3n_2^4} \\ 
		\frac{xy(x^2+(-1+x^4)y^2)}{n_1^{3}n_2^{3}} & \frac{(1-y^2+x^2(2+y^2+2x^2y^2)) y}{n_1^3n_2^4} & -\frac{2(1+x^2y^2)y^2}{n_2^4}\end{pmatrix} . \]
		The reader may verify that, indeed,
		\[ D \Psi(L) ([L, - \Lambda ])= 3 \lambda  \left( \frac{ \partial \Psi(L)}{\partial x} \cdot x  - \frac{\partial \Psi(L)}{\partial y} \cdot y \right) \  = [\Psi(L),\pi_{\so}\Psi(L)] \, .\]
\end{example}

\begin{remark} Let $\cO$ be the orthogonal conjugacy class of the diagonal matrix $\Lambda$ with ordered entries $\lambda_1, \dots, \lambda_1, \lambda_{2}, \dots, \lambda_2$, $\lambda_1 < \lambda_2$, where $\lambda_1$ and $\lambda_2$ appear respectively $d$ and $n-d$ times. There is a canonical diffeomorphism between $\cO$ and the Grassmannian $\mathrm{Gr}(d,n)$: Take the eigenspace $F \subset \RR^n$ of the matrix $X \in \cO$ associated with $\lambda_1$. Under this diffeomorphism, the equilibria of the Toda vector field correspond to coordinate subspaces of dimension $d$. 
For the previous example in 	$\mathbb{RP}^2$, the equilibria correspond to the three coordinate axes.

Theorem \ref{thm:main} produces a new atlas for the Grassmanian which linearizes the Toda flow. In $\mathbb{RP}^2$, the charts centered at the previous diagonal matrices are

%
%
%

\begin{flalign*}
 \begin{pmatrix} 1 & 0 & 0\\ x & 1 & 0\\ y & 0 & 1\end{pmatrix}  \equiv (x,y) \longmapsto  &\begin{bmatrix} 1 & - \dfrac{x}{\sqrt{1 + y^2 }} & - \dfrac{y \sqrt{1 + x^2 + y^2 }}{\sqrt{1 + y^2 }} \end{bmatrix}, &\\
 \begin{pmatrix} 1 & 0 & 0\\ x & 1 & 0\\ 0 & y & 1\end{pmatrix}  \equiv (x,y) \longmapsto  &\begin{bmatrix} x \sqrt{1 + y^2 + x^2 y^2} & 1 & - y \sqrt{1 + x^2} \end{bmatrix}, & \\
  \begin{pmatrix} 1 & 0 & 0\\ 0 & 1 & 0\\ x & y & 1\end{pmatrix} \equiv (x,y) \longmapsto  &\begin{bmatrix}  \dfrac{x \sqrt{1 + x^2 + y^2 }}{\sqrt{1 + x^2 }} &  \dfrac{y}{\sqrt{1 + x^2 }} & 1 \end{bmatrix}. &
\end{flalign*}

Notice that these charts are obtained by a nonlinear rescaling on the coordinates of the standard charts of $\mathbb{RP}^2$.

\end{remark}

\section{The constrained $\mathrm{QLU}$ factorization}\label{sec:constrained-QLU}
 All matrices in this section all real.
We discuss the constrained $\mathrm{Q}\L\U$ factorization in the general linear group $\Gl$, rather than for the special linear one. We show how to obtain, away from the solution set of a polynomial equation,  a unique and smooth factorization of an invertible matrix into a orthogonal factor, a lower triangular one, and an upper triangular one with positive diagonal entries (we abuse notation and still denote the subgroup by $\U$),
\[X=QLU,\quad Q\in \mathrm{O},\,L\in \L,\,U\in \U,\] so that $Q$ and $L$ have zero entries in positions that obey some appropriate patterns.

To any given  diagonal matrix $\Lambda$ of size $n$ we associate the following data.
\begin{itemize}
 \item
An ordered partition of the  set
$\{1,\dots,n\}$ into subsets $\Lambda_{1},\dots,\Lambda_{k}$ that collect the positions of the equal eigenvalues of $\Lambda$ (the positions on each subset need not be consecutive). We assume  without loss of generality that if  $i<j$, then the first position in $\Lambda_{i}$ is smaller than the first position in $\Lambda_{j}$. For  $i\in \{1,\dots,n\}$, let $\Lambda_{(i)}$ the subset that contains $i$.
 \item The direct sum decomposition  into coordinate subspaces
\[\R^n=V_{1}\oplus\cdots \oplus V_{k},\]
where $V_{i}$ is spanned by the coordinates with indices in $\Lambda_{i}$.
\item The subgroup $\mathrm{O}_\Lambda\subset \Gl$ of orthogonal matrices that commute with $\Lambda$, or, equivalently, the matrices that leave invariant all $V_i$.
\item The space $\L^\Lambda$  introduced in \eqref{eq:chart-domain}:  unit triangular matrices whose subdiagonal entry  $(ij)$ is zero if $i$ and $j$ belong to the same $\Lambda_{s}$.
\end{itemize}

%

Consider the product 
\[\mathrm{O}_\Lambda\L^\Lambda \U=\{X\in \Gl\,|\ X=QLU,\quad Q\in\mathrm{O}_\Lambda,\,L\in \L^\Lambda,\,U\in \U\}.\]
An application of the inverse function theorem shows that there is a neighborhood of the identity matrix  where the factorization is unique.

\begin{theorem}\label{thm:constrained-QLU-detailed} The constrained $\mathrm{Q}\L\U$ factorization for invertible matrices
 \[\mathrm{O}_\Lambda\L^\Lambda \U\subset \Gl\]
has the following properties.
\begin{enumerate}
 \item (Uniqueness and smoothness) The factorization is unique and it depends smoothly on parameters.
 \item (Density) The factorization exists for matrices  in
 \[\mathfrak{C}^{\Lambda}=\{X\in \Gl\ |\  p_1\cdot p_2\cdots p_{n-1}(X)\neq 0\},\]
 where each $p_i$ is a homogeneous polynomial in the entries of $X$. In particular, $\mathfrak{C}^{\Lambda}$ is an open dense subset of $\Gl$.
 \item (Pivoting) For any $X\in \Gl$ a permutation $P$ exists such that $PX\in \mathfrak{C}^{P\Lambda P^T}$. Equivalently,
 \[\Gl=\bigcup_{P\in \cS_n}P\, \mathfrak{C}^{P^T\Lambda P},\]
 where $\cS_n$ is the group of permutation matrices.
 \end{enumerate}
\end{theorem}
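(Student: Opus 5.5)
The plan is to turn the factorization into a column-by-column, Gram--Schmidt-type recursion, and to read off uniqueness, smoothness and the exceptional polynomial set from that recursion. Write $X=QLU$ as $XU^{-1}=QL$. Since $U^{-1}$ is upper triangular with positive diagonal, the columns $w_1,\dots,w_n$ of $W=QL$ must satisfy $w_j\in F_j=\mathrm{span}\{x_1,\dots,x_j\}$, the flag of columns of $X$, with positive coefficient on $x_j$. Let $P_s$ denote the orthogonal projection onto $V_s$. Because $Q\in\mathrm{O}_\Lambda$ preserves each $V_s$, we have $P_sQ=QP_s$.

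By the shape of $\L^\Lambda$, the column $Le_j$ equals $e_j$ plus entries in positions $i>j$ with $i\notin\Lambda_{(j)}$. Hence:
\[
P_{(j)}w_j=Qe_j,\qquad P_sw_j\in \mathrm{span}\{Qe_i\,|\,i\in\Lambda_s,\ i>j\}\quad (s\neq(j)).
\]
Since $Q$ is orthogonal on each $V_s$, the second condition says exactly that $P_{(i)}w_j\perp Qe_i$ for every $i<j$. Conversely, the first condition plus orthonormality of the $Qe_i$ inside each block gives $P_{(j)}w_j\perp Qe_i$ for $i<j$ in $\Lambda_{(j)}$, together with $|P_{(j)}w_j|=1$. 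So all constraints take the uniform form
\[
\langle P_{(i)}w_j,\,Qe_i\rangle=0\ \ (i<j),\qquad |P_{(j)}w_j|=1.
\]

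This suggests an induction on $j$. Assume $Qe_1,\dots,Qe_{j-1}$ are already determined. Look for $w_j=\sum_{m\le j}c_mx_m$ satisfying the $j-1$ homogeneous linear equations $\langle x_m,P_{(i)}Qe_i\rangle$-weighted sums $=0$ in the $j$ unknowns $c_m$. If the $(j-1)\times(j-1)$ minor of this system obtained by deleting the $c_j$ column is nonzero, the solution line is one-dimensional with $c_j\neq0$. Normalize it by $c_j>0$, require $P_{(j)}w_j\neq0$, rescale so that $|P_{(j)}w_j|=1$, and set $Qe_j=P_{(j)}w_j$. Then:
\begin{itemize}
\item $Q\in\mathrm{O}_\Lambda$ by construction;
\item $L=Q^{-1}W$ lies in $\L^\Lambda$, by the equivalence above;
\item $U=W^{-1}X$ lies in $\U$.
\end{itemize}
Uniqueness follows because every step is forced. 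Smoothness follows because each step is given by Cramer's rule, a normalization and a square root of a positive quantity. The step $j=n$ imposes no new genericity condition: $W$ is invertible, so $P_{(n)}w_n\neq 0$ is automatic once the earlier steps succeed, which explains why only $p_1,\dots,p_{n-1}$ appear.

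The main obstacle will be to show that the step-$j$ nondegeneracy conditions (the minor and $P_{(j)}w_j\neq0$) are equivalent to the nonvanishing of a single homogeneous polynomial $p_j$ in the entries of $X$, rather than of expressions in the previously computed vectors $Qe_i$, which involve square roots. I would first try to clear denominators inductively. Each $Qe_i$ is a polynomial vector divided by $\sqrt{\cdot}$ times lower $p$'s, and the orthogonality conditions are invariant under positive rescaling of $Qe_i$. This should let me replace $Qe_i$ by unnormalized polynomial vectors $\tilde q_i$ and express the minor as a Gram-type determinant in the $x_m$ and $\tilde q_i$, which is homogeneous. Density then follows because $p_j(\mathrm{I})\neq0$, the factorization of the identity being trivial, so each $p_j$ is a nonzero polynomial and its zero set is nowhere dense.

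For pivoting, I would mimic partial pivoting in the $\L\U$ factorization. Replacing $X$ by $PX$ and $\Lambda$ by $P\Lambda P^T$ permutes the roles of the coordinates, so at step $j$ one may choose which not-yet-used coordinate index plays the role of $j$. Given $X$ invertible, I would show that at each step some choice of the next row index makes the corresponding minor and projection nonzero. The reason is that $W$ must have full rank, so the subspace of admissible $w_j$ cannot be killed simultaneously by all possible remaining choices. Assembling these choices gives $P$ with $PX\in\mathfrak{C}^{P\Lambda P^T}$, which is the claimed equality $\Gl=\bigcup_P P\,\mathfrak{C}^{P^T\Lambda P}$.
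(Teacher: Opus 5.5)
Your treatment of (1) and (2) is essentially the paper's first algorithm. The rows $r_i$ of $Q^T$ there are your $Qe_i$, and the paper solves $Q^TXY=L$ column by column exactly as you solve for $w_j=XYe_j$; your formulation $XU^{-1}\in\mathrm{O}_\Lambda\L^\Lambda$ is also the viewpoint of its second algorithm. It clears denominators as you plan, using unnormalized vectors $R_i$, so that $p_i=\det(R_k\cdot c_m)_{k,m\le i}$ is your Gram-type minor. Your remark that $p_j(\mathrm{I})\neq 0$ supplies the nontriviality the paper leaves implicit.

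The genuine difference is pivoting. The paper gives no algebraic argument there. It invokes the covering statement of Theorem~\ref{thm:main}, which rests on dynamics: hyperbolic equilibria, the strict Lyapunov function $\mathrm{tr}(AX)$, and stable manifolds covering the compact orbit $\cO$. It then translates this into $\So=\bigcup_P P\,\So_{P^T\Lambda P}\,\kappa(\L^{P^T\Lambda P})$ and hence into $\Gl=\bigcup_P P\,\mathfrak{C}^{P^T\Lambda P}$.

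Your greedy pivoting works and is more elementary and constructive. However, the reason you give (``$W$ must have full rank'') is not what makes it work. The correct argument runs as follows.
\begin{enumerate}
\item Passing to $(PX,P\Lambda P^T)$ leaves all inner products $P_sx_m\cdot P_sx_k$ unchanged. It only changes which block is used at each step, with each $V_s$ used $\dim V_s$ times.
\item The vector $w_j$ is determined by $Qe_1,\dots,Qe_{j-1}$, not by the block chosen at step $j$.
\item Since $w_j\perp Qe_i$ for all $i<j$, $w_j$ is orthogonal to every block already exhausted.
\end{enumerate}
As $w_j\neq 0$, some non-exhausted block $V_s$ has $P_sw_j\neq 0$, and choosing it keeps the algorithm unobstructed. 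The same orthogonality, not invertibility of $W$ alone, is why step $n$ is automatic: $w_n$ lies in $V_{(n)}$, so $P_{(n)}w_n=w_n\neq 0$.

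Your route gives an explicit partial-pivoting rule independent of the Toda flow. Read backwards, it yields a dynamics-free proof of the covering part of Theorem~\ref{thm:main}. The paper's route is shorter on the page but depends on Theorem~\ref{thm:main} and the dynamical facts behind it.
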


\begin{example}
Two examples  of the constrained $\mathrm{Q}\L\U$ factorization are familiar.
 \begin{enumerate}
  \item $\Lambda_1=\{1,\dots,n\}\Longrightarrow \Gl_\Lambda=\Gl,\,\L^\Lambda=\{\mathrm{I}\}$.\newline
  In this case we recover the $\mathrm{QR}$ factorization, that is unobstructed.
  \item $\Lambda_i=\{i\}\Longrightarrow \Gl_\Lambda=\E,\,\L^\Lambda=\L$.
  In this case we recover the $\L\U$ factorization, up to  multiplication on the left by a sign matrix. The obstruction is that some principal minor of $X$ be zero.
 \end{enumerate}

 As the subsets of repeated eigenvalues of $\Lambda$ range through all possible partitions of the ordered set $\{1,2,\dots,n\}$, the corresponding instances of the constrained $\mathrm{Q}\L\U$ factorization provide an interpolation of sorts between the  $\mathrm{QR}$ and (signed) $\L\U$ factorizations.
\end{example}
\begin{proof}
Given $X\in \Gl$ our purpose is
\begin{enumerate}[(i)]
 \item to
describe an algorithm that finds inductively the $i+1$-th column of $Q\in \mathrm{O}_\Lambda$ (and the $i+1$-th column of $L\in \L^\Lambda$) such that
\[X=QLU,\quad U\in \U;\]
\item to state the obstruction to the $i+1$-th step as the vanishing of certain homogeneous polynomial in the entries of the first $i$ columns of $X$.
\end{enumerate}
Our algorithm for the constrained $\mathrm{QLU}$ factorization of a given matrix $X\in \Gl$ stems from thinking that we give ourselves the freedom to multiply $X$ by the left by $\mathrm{O}_\Lambda$, so that the $\mathrm{LU}$ factorization of the ensuing product has unit lower triangular factor in $\L^\Lambda$.

More precisely, let $c_1,\dots,c_n$ denote the columns of the given matrix $X\in \Gl$, and let $r_1,\dots,r_n$ denote the rows of the unknown $Q^T\in \mathrm{O}_\Lambda$, so that $Q^T X$  posseses an $\L\U$ factorization with first factor in $\L^\Lambda$,
\[Q^T X=LU.\]
 We solve the equivalent equation
\[Q^T XY=L, \quad L\in \L^\Lambda,\quad Y=U^{-1}.\]

Let $\pr_{i}:\R^n\to V_i$ denote the orthogonal projection. Consider the homogeneous degree two polynomials in the entries of $X$ defined by  the inner product
\[y_{i,jk}=\pr_i(c_j)\cdot \pr_i(c_k).\]
\vskip .2cm

\noindent{\underline{Step 1:}} The entries of first column of the matrix $Q^T X$ are
 $r_j\cdot c_1$. The
 equality
 \[Q^T XY=L\]
 implies that the first column of $L$ is the  normalization of the column $r_j\cdot c_1$ by $Y_{11}:=r_1\cdot c_1$.
    For $L$ to be in $\L^\Lambda$,
     the condition on its first column is
  that its first entry is nonzero and any other entry in $\Lambda_1$ is zero,
 \begin{equation}\label{eq:orthogonal}r_1\cdot \pr_{1}(c_1)\neq 0,\quad r_{l_2}\cdot \pr_{1}(c_1)=0,\,\dots\,, r_{l_a}\cdot \pr_{1}(c_1)=0,\quad \Lambda_1=\{1,l_2,\dots,l_a\},
 \end{equation}
 where we have written projections for the column vector $c_1$ because  $r_1,r_{l_i}\in V_1$. The first equation  holds if and only if
 \[R_1:=\pr_{1}(c_1)\neq 0\Longleftrightarrow y_{1,11}\neq 0\quad (y_{1,11}=\pr_{1}(c_1)\cdot \pr_{1}(c_1)).\]
 We assume this is the case.
 Because $r_1,r_{l_2},\dots,r_{l_a}$ must be an orthogonal basis of $V_1$, the equations in \eqref{eq:orthogonal} from the second to the last one imply that
 \[r_1=\frac{\pr_{1}(c_1)}{|\pr_{1}(c_1)|}.\]
 There is just one choice for $r_1$ and $Y_{11}=r_1\cdot c_1>0$.
\vskip .2cm
 \noindent{\underline{Step 2:}}  We look for $r_2$ (and for $Y_{12}$ and $Y_{22}$) so that the linear system
 \[\begin{pmatrix} r_1\cdot c_1 & r_1\cdot c_2\\
  r_2\cdot c_1 & r_2\cdot c_2\\
   \end{pmatrix}\begin{pmatrix} Y_{12}\\Y_{22}\end{pmatrix}
   =\begin{pmatrix} 0\\1\end{pmatrix},\quad Y_{22}>0\]
     has solution and the ensuing second column of $L$ is such that $L\in \L^\Lambda$. The system can be rewritten in the more geometric form
      \[\begin{pmatrix} r_1\cdot (Y_{12}c_1+ Y_{22}c_2)\\
  r_2\cdot (Y_{12}c_1+ Y_{22}c_2)
   \end{pmatrix}
   =\begin{pmatrix} 0\\1\end{pmatrix}\Longleftrightarrow \begin{pmatrix} R_1\cdot (Y_{12}c_1+ Y_{22}c_2)\\
  r_2\cdot (Y_{12}c_1+Y_{22}c_2)\end{pmatrix}
   =\begin{pmatrix} 0\\1\end{pmatrix}.\]
  \begin{itemize}
\item \emph{Case 1}: $\Lambda_{(2)}=\Lambda_1$. The system becomes
\[
   \begin{pmatrix} R_1\cdot (Y_{12}\pr_1(c_1)+ Y_{22}\pr_1(c_2))\\
  r_2\cdot (Y_{12}\pr_1(c_1)+Y_{22}\pr_1(c_2))\end{pmatrix}
   =\begin{pmatrix} 0\\1\end{pmatrix}.\]
   Because $r_1\cdot c_1\neq 0$, there is a 1-parameter family of solutions of the first equation, one of which is
  \[R_2=-y_{1,12}\pr_{1}(c_1)+ y_{1,11}\pr_{1}(c_2),\quad (y_{1,11}>0).\]
  We  assume that this is not the trivial vector.
The first equation, together with the set of equations in  \eqref{eq:orthogonal} for $l_j$, $j>1$ (that come from  $L\in \L^\Lambda$), imply that the unique solution  $r_2$ is the normalization
\[r_2=\frac{R_2}{||R_2||},\quad (Y_{12}=-\frac{y_{1,12}}{||R_2||},\quad Y_{22}=\frac{y_{1,11}}{||R_2||}>0).\]
   \item \emph{Case 2}: $\Lambda_{(2)}\neq  \Lambda_1$. The system becomes
    \[\begin{pmatrix} R_1\cdot (Y_{12}\pr_{1}(c_1) +Y_{22}\pr_{1}(c_2))\\
  r_2\cdot (Y_{12}\pr_{2}(c_1)+Y_{22}\pr_{2}( c_2))\\
   \end{pmatrix}
   =\begin{pmatrix} 0\\1\end{pmatrix}.\]
   As in the previous case,
 \[-y_{1,12}\pr_{1}(c_1)+ y_{1,11}\pr_{1}(c_2),\quad (y_{1,11}>0)\]
 is a solution to the first equation (that has a 1-parameter family of solutions).
The second equation holds if and only if  
 \[R_2=-y_{1,12}\pr_{2}(c_1)+ y_{1,11}\pr_{2}(c_2)\neq 0.\]
We assume this is the case.
For the second column of $L$ to be in $\L^\Lambda$,
\[ r_{m_j}\cdot R_2=0,\quad \Lambda_{(2)}=\{ 2,m_2,\dots, m_b\},\]
which imply that the unique solution is
\[r_2=\frac{R_2}{||R_2||}\quad (Y_{12}=-\frac{y_{1,12}}{||R_2||},\quad Y_{22}=\frac{y_{1,11}}{||R_2||}>0).\]
Note that because $\Lambda_1\neq \Lambda_{(2)}$, the rows $r_1,r_2$ are  orthogonal, and that the coefficients of $R_2$ are homogeneous degree one polynomials in $y_{i,jk}$.
\end{itemize}
\vskip .2cm
We finish \underline{Step 2} observing that the equality
 \[\begin{pmatrix} r_1\cdot c_1 & r_1\cdot c_2\\
  r_2\cdot c_1 & r_2\cdot c_2\\
   \end{pmatrix}\begin{pmatrix}Y_{11} & Y_{12}\\ 0 & Y_{22}\end{pmatrix}
   =\begin{pmatrix} 1 & 0 \\ L_{21} & 1\end{pmatrix}\]
implies
\[\begin{vmatrix} r_1\cdot c_1 & r_1\cdot c_2\\
  r_2\cdot c_1 & r_2\cdot c_2\\
   \end{vmatrix}\neq 0 \Longleftrightarrow p_2:=\begin{vmatrix} R_1\cdot c_1 & R_1\cdot c_2\\
   R_2\cdot c_1 & R_2\cdot c_2
  \end{vmatrix}\neq 0.
  \]
 In {\emph{Case 2}} the determinant $p_2$ is
  \[\begin{vmatrix} y_{1,11} & y_{1,12}\\
  -y_{1,12}y_{2,11}+ y_{1,11}y_{2,12} & -y_{1,12}y_{2,12}+ y_{1,11}y_{2,22}
  \end{vmatrix}.\]
  In {\emph{Case 1}}  the expression is analogous. In the second row, $y_{2,ij}$ is replaced by $y_{1,ij}$,
  \[p_2=y_{1,11}(-y_{1,12}y_{1,12}+ y_{1,11}y_{1,22}).\]
  Therefore, the assumptions to apply \underline{Step 1} and \underline{Step 2} are equivalent to nonvanishing conditions on determinants
  \[R_1\neq 0,\, R_2\neq 0\Longleftrightarrow p_1:=y_{1,11}\neq 0,\, p_2\neq 0.\]
\vskip .2cm

\noindent{\underline{Step i+1:}} We look for $r_{i+1}$ (and $Y_{1,i+1},\dots Y_{i+1,i+1}$) a solution
of the linear system
\[\begin{pmatrix} r_1\cdot c_1 &  \ldots & r_1\cdot c_i &  r_1\cdot c_{i+1} \\
\vdots  & \ddots & \vdots &\vdots \\
r_i\cdot c_1 & \ldots & r_i\cdot c_i & r_i\cdot c_{i+1}\\
r_{i+1}\cdot c_1 & \ldots & r_{i+1}\cdot c_i & r_{i+1}\cdot c_{i+1}
  \end{pmatrix}\begin{pmatrix} Y_{1,i+1} \\\vdots \\ Y_{i,i+1}\\ Y_{i+1,i+1}\end{pmatrix}=\begin{pmatrix} 0 \\\vdots \\ 0\\1\end{pmatrix},\quad Y_{i+1,i+1}>0,
   \]
  so that the ensuing column of $L$ is such that $L\in \L^\Lambda$.
By induction
\[p_i=\begin{vmatrix} R_1\cdot c_1 &  \ldots & R_1\cdot c_i \\
\vdots  & \ddots & \vdots \\
R_i\cdot c_1 & \ldots & R_i\cdot c_i & \end{vmatrix}\neq 0,\]
so the subsystem of the first $i$ equations has a 1-parameter family of solutions. Among them, one is of the form
\[(x_1,\dots,c_i,|p_i|),\]
that we assume to be nonzero.
The  equations for indices in $\Lambda_{(i+1)}$ different from $i+1$, accounting for the vanishing of the corresponding rows in the $i+1$-th column of $L$,  imply that the unique solution $r_{i+1}$ is the normalization of
\[R_{i+1}=x_1\pr_{(i+1)}(c_1)+\cdots+x_i\pr_{(i+1)}(c_i)+|p_i|\pr_{(i+1)}(c_{i+1}),\]
which we assume to be nontrivial.
 The equations in the subsystem for indices in $\Lambda_{(i+1)}$ imply that $r_{i+1}$ is orthogonal to those rows. (It is automatically orthogonal to rows with indices not in $\Lambda_{(i+1)}$).

The vector of coefficients $(x_1,\dots,x_i)$ of $R_{i+1}$ is the solution of
\[\begin{pmatrix} R_1\cdot c_1 &  \ldots & R_1\cdot c_i \\
\vdots  & \ddots & \vdots  \\
R_i\cdot c_1 & \ldots & R_i\cdot c_i
  \end{pmatrix}\begin{pmatrix} x_1 \\\vdots \\ x_i\end{pmatrix}=\begin{pmatrix} -|p_i|R_1\cdot \pr_{1}(c_{i+1}) \\\vdots \\ -|p_i|R_i\cdot \pr_{(i)}(c_{i+1})\end{pmatrix}.
   \]
 Each row of the extended matrix of the system is, by induction, a homogeneous polynomial in $y_{i,jk}$. By Cramer's rule $(x_1,\dots,x_i)$ are homogeneous polynomials in $y_{i,jk}$ (of the same degree). Hence the coefficients of
 \[R_{i+1}=x_1\pr_{(i+1)}(c_1)+\cdots+\pr_{(i+1)}(c_i)+|p_i|\pr_{(i+1)}(c_{i+1})\]
are also homogeneous polynomials in $y_{i,jk}$ (of the same degree). Thus $p_{i+1}$ is a homogeneous polynomial in these variables.

The conclusion is that if we choose $X$ such that
\[p_1\cdots p_{n-1}(X)\neq 0,\]
then $X$ has a unique constrained $\mathrm{Q}\L\U$ factorization (the last step/polynomial is unobstructed because $X$ is invertible).

The uniqueness of the factorization follows from the uniqueness of each step. In particular, given $X=QLU$, then because 
\[Q^T XY=L,\quad Y=U^{-1},\]
one checks that the algorithm must be unobstructed, this implying that  $\mathrm{O}_\Lambda L^\Lambda\mathrm{U}$ is the complement of the algebraic variety $p_1\cdots p_{n-1}=0$. The smooth dependence on parameters of the algorithm is clear (in fact, it is real analytic), and the three factors are submanifolds. Therefore properties (1) and (2) hold.

As for the pivoting condition, we remark that in the case $\Lambda_{i}=\{i\}$, that returns the $\L\U$ factorization, for any permutation $P$ the partitions associated to   $P\Lambda P^T$ and $\Lambda$ agree. Therefore we recover the classical pivoting condition.

For an arbitrary $\Lambda$, rather than making a finer analysis of the  zero set of the polynomial
$p_1\cdots p_{n-1}$ for all permutations, we appeal to Theorem \ref{thm:main}. That the union of the domains of the local coordinates covers $\cO$ translates into
\[\So=\bigcup_{P\in \cS_n} P\, \So(P^T\Lambda P)\, \kappa(\L^{P^T\Lambda P}),\]
in the notation of Theorem \ref{thm:main}, which is equivalent to
\[\Gl=\bigcup_{P\in \cS_n} P\, \mathfrak{C}^{P^T\Lambda P}.\]
There is redundancy among the subsets above: Two permutations $P_1,P_2$ yield the same open dense subset of $\Gl$ if and only if $P^T_1P_2\in \mathrm{O}_\Lambda$.
\end{proof}

\subsection{Another algorithm for the constrained $\mathrm{Q}\L\U$ factorization}
Theorem \ref{thm:constrained-QLU-detailed} describes an algorithm for the constrained $\mathrm{Q}\L\U$ factorization based on the $\L\U$ factorization: Given $X\in \Gl$, the main unknown is  $Q\in \mathrm{O}_\Lambda$ subject to
\[Q^TX=LU,\quad L\in \L^\Lambda.\]

We present a second algorithm where the main unknown is $U\in \U$ subject to
\begin{equation}\label{eq:algorithm-2}
XU^{-1}\in \mathrm{O}_\Lambda \L^\Lambda.
 \end{equation}
This approach leans on the explicit description of $\mathrm{O}_\Lambda \L^\Lambda$. For instance, for $n=4$ and $\Lambda_1=\{1,3\}$, $\Lambda_2=\{2,4\}$, the product of
					\[ Q \ = \ \begin{pmatrix} Q_{11}& 0 & Q_{13}& 0\\ 0& Q_{22}& 0& Q_{24}\\ Q_{31}& 0& Q_{33}& 0\\ 0& Q_{42}& 0& Q_{44} \end{pmatrix}, \quad  L \ = \  \begin{pmatrix} 1& 0& 0& 0\\ L_{21}& 1& 0& 0\\ 0& L_{32}& 1& 0\\ L_{41}& 0& L_{43} & 1\end{pmatrix}, \]equals  \[QL=\begin{pmatrix}
						Q_{11} & Q_{13} L_{32}& Q_{13} & 0\\
						Q_{22} L_{21}+Q_{24} L_{41}  & Q_{22}  & Q_{24} L_{43}  & Q_{24}\\
						Q_{3 1}  & Q_{3 3}L_{32}  & Q_{3 3}  & 0\\
						Q_{4 2} L_{21}+Q_{4 4} L_{41}  & Q_{4 2}  & Q_{4 4} L_{43}  & Q_{4 4} \end{pmatrix}  \]
From this example one deduces that in general
\[QL=Q(\mathrm{I}+(L-\mathrm{I}))=Q+N,\quad Q\in\mathrm{O}_\Lambda,\,L\in \L^\Lambda,\]
where $N=Q(L-\mathrm{I})$ is characterized as follows: Its $i$-th column $c_i(N)$
satisfies
\[\pr_{(i)}(c_i(N))=0,\qquad \pr_{j}(c_i)=\sum_{k>i,k\in \Lambda_j}L_{ki}\pr_{j}(c_k(N)),\quad j\neq (i).\]
In other words,
\begin{itemize}
 \item  The projection of $N$ onto $V_i$ is trivial.
 \item The projection onto $V_j\neq V_i$  is a linear combination of the orthonormal subframe $\pr_j(c_k(Q))$, $k>i,k\in \Lambda_j$, of $V_j$. Equivalently, it is perpendicular to  $\pr_j(c_k(Q))$, $k<i,k\in \Lambda_j$.
\end{itemize}

Likewise, the computation of the product $QLU$ in the example above suggests a way to solve \eqref{eq:algorithm-2} by writing $U^{-1}=U^{[1]}\cdots U^{[n]}$, where the only nonzero off diagonal entries of $U^{[i]}$ occur in the $i$-th row.

To carry out Step 1 we assume that   $\pr_1(c_1(X))\neq 0$. Then $U^{[1]}$ is uniquely determined by
\[U^{[1]}_{11}=||\pr_1(c_1(X))||,\quad (U^{[1]}_{1k}\pr_1(c_1(X))+\pr_1(c_k(X))\cdot \pr_1(c_1(X))=0.\]
The first column of  $X^{[1]}=XU^{[1]}$ satisfies the requirements to belong to $\mathrm{O}_\Lambda \L^\Lambda$, and the projection of the remaining columns onto $V_1$ are perpendicular to $\pr_1(c_1(X))$.

We assume that we have carried out Step $i$ and obtained $X^{[i]}=XU^{[1]}\cdots U^{[i]}$ whose first $i$ columns satisfy the requirements to belong to $\mathrm{O}_\Lambda\L^\Lambda$, and whose remaining columns are perpendicular to $\pr_{1}(c_1(X^{[i]}),\dots,\pr_{(i)}(c_i(X^{[i]})$. To carry out Step $i+1$ we assume that
 $\pr_1(c_{i+1}(X^{[i]})\neq 0$ and compute $U^{[i+1]}$ by solving
 \[U^{[i+1]}_{i+1\,i+1}=||\pr_{(i+1)}(c_{i+1}(X^{[i]}))||,\]
 \[(U^{[i+1]}_{i+1\,k}\pr_{(i+1)}(c_{i+1}(X^{[i]}))+\pr_{(i+1)}(c_k(X^{[i]}))\cdot \pr_{(i+1)}(c_{i+1}(X^{[i]}))=0.\]

By induction, we obtain $XU^{[1]}\cdots U^{[n]}\in \mathrm{O}_\Lambda\L^\Lambda$, thus yielding another proof of the unique and smoothly dependent factorization  $\mathrm{O}_\Lambda\L^\Lambda\U\subset \Gl$.

\subsection{Matrices with complex or quaternionic entries}

Linearization and constrained factorization  also hold for matrices with complex or quaternionic entries.

In the complex setting, we split traceless complex matrices into skew-Hermitian matrices and upper triangular matrices with real diagonal entries
\begin{equation}\label{eq:Toda-complex}\sl_\C=\mathfrak{k}\oplus \uu.
 \end{equation}
The Toda vector field is defined as in \eqref{eq:Toda} by
\[X'=[X,\pi_{\mathfrak{k}}X].\]
 The Hermitian conjugacy class $\cO\subset \sl_\C$ consist of Hermitian matrices with fixed spectrum. Equivalently, it is the result of conjugating a traceless real diagonal matrix by the special unitary group $\mathrm{K}$. The group $\L\subset \Sl_\C$ now consists of unit lower triangular matrices with complex coefficients. Theorem \ref{thm:main} holds in this setting: Around a real diagonal matrix $\Lambda\in \cO$ there are coordinates with values  in $\L^\Lambda\subset \L$ in which the Toda vector field reads
 \[L'=[L,-\Lambda],\quad L\in \L^\Lambda,\]
  where $\L^\Lambda$ is defined by the same equations \eqref{eq:chart-domain} as in the case of real matrices.

The proof is the same. Symes' result holds for the trajectories of the Toda vector field defined by an Iwasawa decomposition of a real semisimple Lie algebra, as in \eqref{eq:Toda-complex}, and
\[\Psi:\L\to \cO,\quad L\mapsto \kappa(L)^T\Lambda\kappa(L),\]
also sends $L'=[L,-\Lambda]$, $L\in \L$, to the Toda vector field, where $\kappa:\Sl_\mathbb{C}\to \mathrm{K}$ is the first projection associated to the $\mathrm{QR}$ factorization with respect to the standard Hermitian inner product in $\C^n$. Therefore the proof of the Theorem \ref{thm:main} rests again on the analog of Theorem \ref{thm:constrained-QLU} for complex matrices: The existence of a constrained $\mathrm{Q}\L\U$ factorization
\[\mathrm{K}_\Lambda \L^\Lambda \U\subset \Sl_\C, \quad \mathrm{K}_\Lambda=\{K\in \mathrm{K}\,|\, K\Lambda=\Lambda K\},\]
which is unique, depends on parameters in a smooth (real analytic) fashion, and is defined in the complement of the zero set of  homogeneous real valued polynomials on the real and imaginary parts of the entries of $X$. The counterpart to Theorem \ref{thm:constrained-QLU}  can be proved by applying either of the algorithms we described for real matrices, with minor adjustments. More precisely, for the first algorithm we solve
\[\overline{K}X=LU,\quad K\in \mathrm{K}_\Lambda,\,L\in \L^\Lambda,\]
so that entries of the matrix product   $\overline{K} X$ can be expressed using the standard Hermitian inner product. Because $\mathrm{K}$ is invariant under complex conjugation, it follows that $\overline{K}\in \mathrm{K}_\Lambda$, as desired.

As for quaternionic coefficients, we regard $\sl_\mathbb{H}(n)\subset \sl_\C(2n)$ as
matrices whose entries are $2\times 2$ complex matrices that correspond to quaternions, and whose trace is a quaternion with trivial scalar part. We recall that the $\mathrm{QR}$ factorization on $\Sl_\C$ applied to a quaternionic matrix returns quaternionic factors \cite{Sa}. One  consequence is that, upon differentiation, we obtain the direct sum decomposition
\[\sl_\mathbb{H}=(\mathfrak{k}\cap  \sl_\mathbb{H}) \oplus (\uu\oplus \sl_\mathbb{H}),\]
which implies that the Toda vector field on $\sl_\C$ is tangent to $\sl_\mathbb{H}$.
Another consequence is  that the restriction of $\Psi:\L\to \cO$ to $\L\cap \Sl_\mathbb{H}$ takes values in the quaternionic Hermitian conjugacy class $\cO\cap \Sl_\mathbb{H}$. Therefore Theorem \ref{thm:main} for quaternionic coefficients is a consequence of the existence of a  constrained $\mathrm{Q}\L\U$ factorization. The factorization in turn can be obtained by applying the first algorithm, where we  regard $X\in \Sl_\mathbb{H}$ as a matrix with quaternionic entries, and not as a matrix in $\sl_{\C}(2n)$, and we use the quaternionic standard inner product in $\mathbb{H}^n$ (understood as a vector space by right multiplication) and the invariance of unitary quaternionic matrices under quaternionic conjugation.

\subsubsection{Some Lie theoretic aspects of the constrained factorization}\label{ssec:Lie-theory}
When the repeated eigenvalues of $\Lambda$ are contiguous, the vector subspace  \[\ll^\Lambda=\{L-\mathrm{I}\,|\,L\in \L^\Lambda\}\] is also a Lie algebra. (These are equivalent statements). In such situation the constrained $\mathrm{Q}\L\U$ factorization is an immediate consequence of the direct sum of Lie algebras
$\sl=\so_\Lambda\oplus \ll^\Lambda\oplus \uu$,
because this implies that the product map
\[\So_\Lambda\times \L^\Lambda\times \U\longrightarrow\So_\Lambda \L^\Lambda\U\]
is a diffeomorphism onto its image \cite[Lemma 6.44]{Kn}. Such charts, however, are not sufficient to cover $\Gl$.

For $\Lambda$ with non-simple spectrum, the unique factorization $\L=\L_\Lambda\L^\Lambda$  and the normalization of $\L^\Lambda$ by $\L_\Lambda$ still hold. One can define a composition law
\[\L^\Lambda\times \L^\Lambda\longrightarrow \, \L^\Lambda,\quad L_1\ast L_2\, = \, \mathrm{pr}_2(L_1L_2),\quad \pr_2:\L\longrightarrow\L^\Lambda.\]
This operation is not associative but has left and right inverses ($(\L^\Lambda,*)$ is a so-called loop). More precisely, the equation
\[L_1L_2=LL_3, \quad L_i\in \L^\Lambda,\,L\in \L_\Lambda,\]
for given $L_1,L_3$ (resp. $L_2,L_3$) and unknowns $L_2,L$ (resp. $L_1,L$) has a unique solution depending analytically on parameters. We do not know whether this loop structure may be related to  the existence of the constrained $\mathrm{Q}\L\U$ factorization.

The Toda vector field is defined on an arbitrary real semisimple Lie algebra $\gg$ once an Iwasawa decomposition $\gg=\kk\oplus\aa\oplus \nn$ has been fixed (with underlying Cartan decomposition $\gg=\kk\oplus \ss$). The generalization of an orthogonal conjugacy class of symmetric matrices is an adjoint orbit of the fixed maximal compact subgroup of an element in $\ss$. Diagonal matrices become elements $\Lambda\in \aa$
and simple spectrum corresponds to the nonvanishing of the roots on $\Lambda$ (regularity).

In \cite{MT}
we constructed local coordinates diagonalizing the Toda vector field on regular symmetric adjoint orbits centered at elements $\Lambda$, that mapped onto  the sum of negative root spaces $\ll$. Note that the group $\L$ is no longer a vector space in general, so it cannot be the target of local coordinates. If $\Lambda$ is not regular, then $\ll$ decomposes into $\ll_\Lambda\oplus \ll^\Lambda$, where the first summand is the isotropy Lie algebra for the action of $\ll$ and the second is the sum of negative root spaces for roots not vanishing at $\Lambda$. The latter subspace is a subalgebra if  $\Lambda$ is in the positive Weyl chamber for the root decomposition. In such case, there is a constrained $\mathrm{Q}\L\U$ factorization  ($\U=\mathrm{A}\mathrm{N}$), and coordinates around $\Lambda$ on its symmetric adjoint orbit diagonalizing the Toda vector field exist.

To produce such coordinates around arbitrary $\Lambda$, a natural attempt from a Lie algebra perspective would be to replace  $\L^\Lambda$ by $\exp(\ll^\Lambda)$. However, as kindly pointed out to us by Qiyuan Gu, $\So_\Lambda\exp(\ll^\Lambda)\U$ already fails to be a factorization in $\Sl_\R(3)$ for $\Lambda=\{\lambda,-2\lambda,\lambda\}$. Indeed, the two products below are factorizations of this kind of the same matrix,
\[\begin{pmatrix} 0 & 0 & 1\\0 & -1 & 0\\ 1 & 0 & 0\end{pmatrix}\cdot\mathrm{exp}\begin{pmatrix} 0 & 0 & 0 \\ -\sqrt{2} & 0 & 0\\  0& -\sqrt{2}  &0 \end{pmatrix}=\mathrm{exp}\begin{pmatrix} 0 & 0 & 0 \\ \sqrt{2} & 0 & 0\\ 0& \sqrt{2}  &0 \end{pmatrix}\cdot\begin{pmatrix} 1 & -\sqrt{2} & 1 \\ 0 & 1 & -\sqrt{2}\\ 0 & 0 & 1 \end{pmatrix}.
\]

\subsubsection{Classical matrix groups}
A  strategy for a classical matrix group  $\mathrm{G}\subset \Sl_\C$  would be to adjust what we did for  $\Sl_\mathbb{H}\subset \Sl_\C$ along the following lines:

\begin{enumerate}[(i)]
 \item Conjugate the subgroup $\mathrm{G}$ within the  special complex linear group so that the $\mathrm{QR}$ and the (signed) $\mathrm{LU}$ factorizations for a matrix $X\in \mathrm{G}$ have factors in the subgroup.

 This is always possible \cite{Sa,MT2}. For instance, take $\mathrm{Sp}_\R(2n)$ as the subgroup leaving invariant the non-standard symplectic from  $\sum_{i=1}^n dx_i\wedge dx_{2n+1-i}$ \cite[Table 1]{Sa}.

 Note that the compatibility of the $\mathrm{QR}$ decomposition with $\mathrm{G}$ implies that the Toda vector field on $\sl_\C$ is tangent to the $\mathrm{K}\cap \mathrm{G}$ conjugacy classes of real diagonal matrices in $\gg$.
 \item Verify whether the constrained $\mathrm{QLU}$ factorization for a real diagonal matrix $\Lambda\in \gg$, or a modification of it, returns factors in $\mathrm{G}$.

 We discuss the case of $\mathrm{Sp}_\R(2n)$ for $\Lambda=\{\lambda_1,\dots, \lambda_n,-\lambda_n,\dots,-\lambda_1\}$ which does  not contain zero. Let $\Lambda_1$ denote the $n\times n$ diagonal matrix with the first $n$ eigenvalues. A matrix in the intersection
 $\mathrm{O}_\Lambda\cap \Sp$ is of the form
 \[Q=\begin{pmatrix} Q_1 & 0 \\ 0 & J Q_1 J\end{pmatrix},\quad Q_1\in \mathrm{O}_{\Lambda_1},\]
 where $J$ is the $n\times n$ matrix with 1 in the antidiagonal and 0 elsewhere (the longest permutation). Given $X\in \Sp(2n)$, we write it in block form
 \[X=\begin{pmatrix} X_1 & X_2 \\ X_3 & X_4\end{pmatrix},\]
and apply the constrained $\mathrm{QLU}$ factorization (in $\Sl_\R(n)$) for $\Lambda_1$ to $X_1$,
\[X_1=Q_1L_1U_1,\quad Q_1\in \mathrm{O}_{\Lambda_1},\,L_1\in \L^{\Lambda_1}.\]
We consider the $\L\U$ factorization of the  product $Z=Q^TX\in \Sp(2n)$,
\[Q^TX=\begin{pmatrix} Q_1^T X_1 & Z_2 \\
        Z_3 & Z_4
       \end{pmatrix}=\begin{pmatrix} L_1 & 0 \\ L_3 & L_4\end{pmatrix} \begin{pmatrix} U_1 & U_2 \\ 0 & U_4\end{pmatrix}.
       \]
Define \[\L^\Lambda(\Sp_\R(2n))=\{L\in \L\cap \Sp\,|\,L_1\in \L^{\Lambda^1}\},\]
so that the previous construction yields a unique and smooth factorization
\[X=QLU,\quad Q\in \mathrm{O}_{\Lambda}\cap \Sp(2n), \, L\in \L^\Lambda(\Sp_\R(2n)),\, U\in \U\cap \Sp(2n).\]
This factorization does not agree in general with the constrained $\mathrm{QLU}$ factorization in $\Sl_\R(2n)$ for $\Lambda$ applied to $X\in \Sp_\R(2n)$. Indeed, the subgroup $\L\cap \Sp_\R(2n)$ equals
\[\left\{ L\in \L\,|\ L_4=J{\left(L_1^{-1}\right)}^TJ,\quad L_1^TJL_3=L_3^TJL_1\right\}.\]
It is parametrized by the entries in $L_1$ and the entries in $L_3$ in and above the antidiagonal (the $n^2$ entries in and above the antidiagonal of the $2n \times 2n$ matrix).
If $\Lambda$ has equal  non-contiguous eigenvalues, then there are nontrivial polynomial entries in $L_4$, and this implies that $L^\Lambda(\Sp_\R(2n))\backslash \L^\Lambda\cap \Sp_\R(2n)$ is nonempty. Any $L$ in this subset is factorized by the procedure introduced above as
\[L=\mathrm{I} L\mathrm{I},\]
whereas the constrained $\mathrm{QLU}$ factorization in $\Sl_{\R}(2n)$ for $\Lambda$ returns
\[L=Q'L'U',\]
where neither $Q'$ nor $L'$ are  symplectic,
\[Q'=\begin{pmatrix} \mathrm{I}_n & 0 \\ 0 & Q'_4\end{pmatrix}, \, Q'_4\neq \mathrm{I}_n\,\Longrightarrow\, Q'\notin \Sp_\R(2n),\]
\[L'=\begin{pmatrix} L_1 & 0 \\ L_3' & L_4' \end{pmatrix},\, L'_4\in \L^{J\Lambda_1 J},\, L_4'\neq L_4=J{\left(L_{1}^{-1}\right)}^TJ\,\Longrightarrow\, L'\notin \Sp_\R(2n).\]

We do not know how to adapt the constrained $\mathrm{QLU}$ factorization in $\Sp_\R(2n)$ for those  $\Lambda$ whose spectrum contains 0.

 \item Addressing the non-linearity of $\L^\Lambda(G)$.

 The subgroup $L\cap \G$ is not a subspace in general. For instance, $\L\cap \Sp_\R(2n)$ is a graph over the subspace corresponding to the $n^2$ entries in and above the antidiagonal. In fact, it is the parametrization of this nilpotent group by coordinates of the second kind (under the canonical isomorphism between the subspace and the Lie algebra $\ll\cap \sp_\R(2n)$). Likewise, when $\Lambda$ does not contain 0 in its spectrum, $\L^\Lambda(\Sp_\R(2n))$ is parametrized by coordinates of the second kind restricted to the the intersection of this subspace  with $\L_{\Lambda_1}$.
 This parametrization is equivariant with respect to the action by conjugation by $\mathrm{e}^{t\Lambda}$.  The composition of the parametrization followed by $L\mapsto \kappa(L)^T\Lambda \kappa(L)$ is a chart which diagonalizes the Toda vector field around $\Lambda$ in the symplectic orthogonal conjugacy class of $\Lambda$.

 When $\Lambda$ has simple spectrum, the coordinates diagonalizing the Toda vector field in \cite{MT} replaced the parametrization of the second kind of $\L\cap \Sp_\R(2n)$ by the exponential map. These two sets of diagonalizing coordinates on the (regular) symplectic orthogonal conjugay class are related by the ensuing de Jonquières automorphism of $\ll\cap \sp_\R(2n)$.
\end{enumerate}

\end{document}